\font\smallit=cmti10
\renewcommand\section{\@startsection {section}{1}{\z@}
{-30pt \@plus -1ex \@minus -.2ex}
{2.3ex \@plus.2ex}
{\normalfont\normalsize\bfseries}}
\renewcommand\subsection{\@startsection{subsection}{2}{\z@}
{-3.25ex\@plus -1ex \@minus -.2ex}
{1.5ex \@plus .2ex}
{\normalfont\normalsize\bfseries}}
\renewcommand{\@seccntformat}[1]{\csname the#1\endcsname. }
\newcounter{dummy}
\newtheorem{lemma}[dummy]{Lemma}
\newtheorem{theorem}[dummy]{Theorem}
\newtheorem{corollary}[dummy]{Corollary}
\begin{document}

\begin{center}
\uppercase{\bf On the divisibility of} $a^n \pm b^n$ \uppercase{by powers of} $n$
\vskip 20pt
{\bf Salvatore Tringali\footnote{This research has been funded from the
European Community's 7th Framework Programme (FP7/2007-2013) under Grant
Agreement No. 276487 (project ApProCEM).}}\\
{\smallit Laboratoire Jacques-Louis Lions (LJLL),
Universit\'e Pierre et Marie Curie (UPMC),
4 place Jussieu, bo\^ite courrier 187,
 75252 Paris (cedex 05), France}\\
{\tt tringali@ann.jussieu.fr}\\
\end{center}
\vskip 30pt

\centerline{\bf Abstract}

\noindent
We determine all triples $(a,b,n)$ of positive integers such that $a$ and $b$ are relatively prime and $n^k$ divides $a^n + b^n$ (respectively,  $a^n - b^n$), when $k$ is the maximum of $a$ and $b$ (in fact, we answer a slightly more general question). As a by-product, it is found that, for $m, n \in \mathbb N^+$ with $n \ge 2$, $n^m$ divides $ m^n + 1$ if and only if $(m,n)=(2,3)$ or $(1,2)$, which generalizes problems from the 1990 and 1999 editions of the International Mathematical Olympiad. The results are related to a conjecture by K. Gy\H{o}ry and C. Smyth on the finiteness of the sets $R_k^\pm(a,b) := \{n \in \mathbb N^+: n^k \mid a^n \pm b^n\}$, when $a,b,k$ are fixed integers with $k \ge 3$, $\gcd(a,b) = 1$ and $|ab| \ge 2$.

\pagestyle{myheadings}
\thispagestyle{empty}
\baselineskip=12.875pt
\vskip 30pt

\section{Introduction}

It is a problem from the 1990 edition of the International Mathematical
Olympiad (shortly, IMO) to find all integers $n \ge 2$ such that $n^2 \mid 2^n + 1$. This is reported as Problem 7.1.15 (p. 147) in \cite{Andre09}, together with a solution by the authors (p. 323), which shows that the only possible $n$ is $3$. On another hand, Problem 4 in the 1999 IMO asks for all pairs $(n,p)$ of positive integers such that
$p$ is a (positive rational) prime, $n \le 2p$ and $n^{p-1} \mid (p-1)^n + 1$. This is Problem 5.1.3 (p. 105) in the same book as above, whose solution by the authors (p. 105) is concluded with the remark that ``With a little bit more work, we can even erase the condition $n \le 2p$.'' Specifically, it is found that the required pairs are $(1, p)$, $(2, 2)$ and $(3, 3)$, where $p$ is an arbitrary prime.

It is now fairly natural to ask whether similar conclusions can be drawn in relation to the more general problem of determining all pairs $(m,n)$ of positive integers for which $n^m \mid m^n + 1$.
In fact, the question is answered in the positive, and even in a stronger form, by Theorem \ref{th:main} below, where the following observations are taken into account to rule out from the analysis a few trivial cases: Given $a,b \in \mathbb Z$ and $n, k \in \mathbb N^+$, we have that $1^k \mid a^n \pm b^n$ and $n^k \mid a^n - a^n$. Furthermore, $n^k \mid a^n \pm b^n$ if and only if $n^k \mid b^n \pm a^n$, and also if and only if $n^k \mid (-a)^n \pm (-b)^n$. Finally, $n^k \mid a^n + (- a)^n$ for $n$ odd and $n^k \mid a^n - (-a)^n$ for $n$ even.
\begin{theorem}
\label{th:main}
Let $a,b,n$ be integers such that $n \ge 2$, $a \ge \max(1,|b|)$ and $b \ge 0$ for $n$ even, and set $\delta := \gcd(a,b)$, $\alpha := \delta^{-1} a$ and $\beta := \delta^{-1}b$.
\begin{enumerate}[label={\rm (\roman{*})}]
\item\label{item:th_pt1} Assume that $\beta \ne -\alpha$ when $n$ is odd. Then, $n^a \mid a^n + b^n$ and $n^\alpha \mid \alpha^n + \beta^n$ if and only if $(a,b,n) = (2,1,3)$ or $(2^c, 2^c, 2)$ for $c \in \{0, 1, 2\}$.
\item\label{item:th_pt2}  Assume $\beta \ne \alpha$. Then, $n^a \mid a^n - b^n$ and $n^\alpha \mid \alpha^n - \beta^n$ if and only if $(a,b,n) = (3,1,2)$ or $(2,-1,3)$.
\end{enumerate}
\end{theorem}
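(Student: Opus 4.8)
The plan is to prove the ``if'' direction by direct substitution into the finitely many listed triples, and to concentrate on the ``only if'' direction. The key point is that the two divisibility hypotheses play different roles: the \emph{primitive} condition $n^\alpha \mid \alpha^n \pm \beta^n$ (with $\gcd(\alpha,\beta)=1$) will pin down the triple $(\alpha,\beta,n)$, after which the condition $n^a \mid a^n \pm b^n = \delta^n(\alpha^n \pm \beta^n)$, together with $a = \delta\alpha$, will determine $\delta$ and hence $(a,b)$. I would first dispose of $\alpha = 1$: then $|\beta| \le 1$, so $\beta \in \{-1,0,1\}$, and $n \mid 1 \pm \beta^n$ is elementary; given the standing hypotheses and the exclusions $\beta \ne \mp\alpha$, this leaves only $\beta = 1$, $n = 2$ in the plus case and nothing in the minus case. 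The bulk of the argument then assumes $\alpha \ge 2$.

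The main engine is a least-prime-factor argument combined with the lifting-the-exponent lemma (LTE). Let $p$ be the smallest prime dividing $n$; since $\gcd(\alpha,\beta)=1$ one checks $p \nmid \alpha\beta$, so $\gamma := \alpha\beta^{-1}$ is a unit mod $p$. Because every prime factor of $n$ is $\ge p$ while every prime factor of $p-1$ is $< p$, we have $\gcd(n,p-1)=1$, whence $\mathrm{ord}_p(\gamma) \mid \gcd(2n,p-1) = \gcd(2,p-1)$. In the minus case this forces $\gamma \equiv 1$, i.e. $p \mid \alpha - \beta$, and in the plus case $\gamma \equiv -1$, i.e. $p \mid \alpha + \beta$. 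For odd $p$, LTE gives $v_p(\alpha^n \pm \beta^n) = v_p(\alpha \pm \beta) + v_p(n)$, and the hypothesis $\alpha\, v_p(n) \le v_p(\alpha^n \pm \beta^n)$ collapses to $(\alpha-1)v_p(n) \le v_p(\alpha \pm \beta)$, whence $3^{\alpha-1} \le p^{v_p(\alpha\pm\beta)} \le |\alpha \pm \beta| \le 2\alpha$. This forces $\alpha = 2$, and tracking the sign conditions leaves exactly $(\alpha,\beta,p) = (2,1,3)$ in the plus case and $(2,-1,3)$ in the minus case. The even case $p=2$ is treated via the $p=2$ form of LTE and the parity constraint that $\alpha,\beta$ must both be odd: in the plus case $v_2(\alpha^n+\beta^n)=1$ forces $\alpha=1$ (already handled), while in the minus case $2^\alpha \le \alpha^2$ forces the odd value $\alpha=3$, hence $\beta=1$.

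The step I expect to be the main obstacle is passing from ``the least prime of $n$ equals $p$ and $v_p(n)=1$'' to ``$n$ equals the single small value'' ($n=3$, resp. $n=2$). It remains to exclude any further prime factor: suppose $q$ is the next smallest prime factor of $n$, and rerun the order argument at $q$. Minimality of $q$ shows $\mathrm{ord}_q(\gamma) \mid \gcd(2n,q-1)$ divides a small fixed integer (a divisor of $6$ when $n$ is odd, of $2$ when $n$ is even), so $q$ must divide a small explicit number such as $2^6-1=63$ or $3^2-1=8$; inspecting the few possibilities and the exact order produces a contradiction. Hence $n$ has no prime factor beyond $p$, yielding $n=3$ (the plus $(2,1)$ case and the minus $(2,-1)$ case, since $2^n-(-1)^n = 2^n+1$ for odd $n$) and $n=2$ (the minus $(3,1)$ case).

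Finally, with $(\alpha,\beta,n)$ fixed I would recover $\delta$ from $n^{\delta\alpha} \mid \delta^n(\alpha^n \pm \beta^n)$ by comparing $p$-adic valuations: for $(2,1,3)$ this reads $2\delta \le 2 + 3\,v_3(\delta)$, which with $\delta \ge 3^{v_3(\delta)}$ forces $\delta = 1$, and likewise $3\delta \le 3 + 2\,v_2(\delta)$ forces $\delta=1$ for $(3,1,2)$; the other cases are identical. This gives $(a,b,n) = (2,1,3)$, $(3,1,2)$, $(2,-1,3)$, while the $\alpha=1$ branch instead permits $\delta \in \{1,2,4\}$ and produces the family $(2^c,2^c,2)$. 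Assembling the cases yields the stated classification.
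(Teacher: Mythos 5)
Your proposal is correct and follows essentially the same route as the paper's own proof: the order computation at the smallest prime divisor of $n$ (using $\gcd(n,p-1)=1$) is exactly the content of Lemma \ref{lem:trivial}, and the rest — the LTE valuation inequality $(\alpha-1)e_p(n)\le e_p(\alpha\pm\beta)$ forcing $\alpha=2$ (resp. $\alpha=3$ when $p=2$), a second least-prime run to exclude further prime factors of $n$, and a valuation comparison to pin down $\delta$ — matches the paper step for step. The only local differences are tactical: the paper reduces the odd-$n$ minus case to part \ref{item:th_pt1} instead of rerunning the order argument, and in the even minus case it parametrizes $\alpha^2-\beta^2=2^{u+1}v$ and invokes Lemma \ref{lem:2}, where you use the slightly leaner direct bound $2^{\alpha}\le 2^{e_2(\alpha^2-\beta^2)}\le\alpha^2-\beta^2\le\alpha^2-1$ together with the oddness of $\alpha$ — an equivalent (and arguably cleaner) way to reach $\alpha=3$, $\beta=1$, $e_2(n)=1$.
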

Theorem \ref{th:main} will be proved in Section \ref{sec:proofs}. It would be interesting to extend the result, possibly
at the expense of some extra solutions, by removing the assumption that $n^\alpha \mid \alpha^n + \beta^n$ or $n^\alpha \mid \alpha^n - \beta^n$ (we continue with the same notation as in the statement above), but at present we do not have great ideas for this.

On another hand, we remark that three out of the six triples implied by Theorem \ref{th:main} in its current formulation come from the identity $2^3 + 1^3 = 3^2$, while the result yields a solution of the IMO problems which have
originally stimulated this work. Specifically, we have the following corollary (of which we omit the obvious proof):
\begin{corollary}
Let $m,n \in \mathbb N^+$. Then $n^m \mid m^n + 1$ if and only if either $(m,n)=(2,3)$, $(m,n) = (1,2)$, or $n=1$ and $m$ is arbitrary.
\end{corollary}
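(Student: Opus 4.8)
The plan is to obtain the corollary as a direct specialization of part \ref{item:th_pt1} of Theorem \ref{th:main}, taking $a := m$ and $b := 1$. First I would dispose of the degenerate case $n = 1$: here $n^m = 1$ divides $m^n + 1$ for every $m \in \mathbb{N}^+$, which accounts for the family ``$n = 1$ and $m$ arbitrary'' in the statement. So from that point on I would assume $n \ge 2$ and aim to show that the only surviving pairs are $(2,3)$ and $(1,2)$.

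Next I would verify that the hypotheses of Theorem \ref{th:main}\ref{item:th_pt1} are met with these choices. Since $m \in \mathbb{N}^+$ we have $a = m \ge 1 = \max(1, |b|)$, and $b = 1 \ge 0$ takes care of the parity constraint imposed when $n$ is even. Because $\delta := \gcd(m, 1) = 1$, we get $\alpha = m$ and $\beta = 1$, so the second divisibility hypothesis $n^\alpha \mid \alpha^n + \beta^n$ coincides verbatim with the first, $n^a \mid a^n + b^n$, both being just $n^m \mid m^n + 1$; thus no extra condition is secretly imposed. Finally, the side condition ``$\beta \ne -\alpha$ when $n$ is odd'' holds automatically, since $\beta = 1 > 0 \ge -m = -\alpha$.

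With the hypotheses in place, Theorem \ref{th:main}\ref{item:th_pt1} asserts that, for $n \ge 2$, one has $n^m \mid m^n + 1$ if and only if the triple $(m, 1, n)$ equals $(2, 1, 3)$ or $(2^c, 2^c, 2)$ for some $c \in \{0, 1, 2\}$. The first possibility yields $(m, n) = (2, 3)$. The second forces the middle entry to be $1$, i.e. $2^c = 1$, whence $c = 0$ and $(m, n) = (1, 2)$, while $c = 1, 2$ give middle entries $2, 4 \ne 1$ and are discarded. Reinstating the $n = 1$ family then completes the list of solutions.

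Since the entire argument is a substitution into an already-established theorem, there is no genuine obstacle here; the only points demanding care are the bookkeeping that makes the two divisibility conditions of part \ref{item:th_pt1} collapse into the single statement $n^m \mid m^n + 1$ — which is exactly what the coprimality $\gcd(m,1) = 1$ guarantees — and the correct filtering of the theorem's solution triples through the requirement $b = 1$.
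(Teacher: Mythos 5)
Your proof is correct and follows exactly the route the paper intends (the paper omits the proof as ``obvious,'' meaning precisely this specialization of Theorem \ref{th:main}\ref{item:th_pt1} with $a = m$, $b = 1$): you rightly note that $\gcd(m,1)=1$ makes the two divisibility hypotheses collapse into the single condition $n^m \mid m^n + 1$, handle the trivial $n=1$ case separately, and correctly filter the theorem's solution triples down to $(2,3)$ and $(1,2)$. Nothing is missing.
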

For the notation and terminology used here but not defined, as well as for
material
concerning classical topics in number theory, the reader should refer to
\cite{Hardy08}. In particular, we write $\mathbb Z$ for the integers, $\mathbb N$ for the nonnegative integers, and $\mathbb N^+$ for $\mathbb N \setminus \{0\}$, each of these sets being endowed with its ordinary addition $+$,
multiplication $\cdot$ and total order $\le$.
For $a,b \in \mathbb Z$ with $a^2
+ b^2 \ne 0$, we denote by $\gcd(a,b)$
the greatest common divisor
of $a$ and $b$.
Lastly, we let $\mathbb P$ be the (positive rational) primes and, for $c \in
\mathbb Z \setminus \{0\}$ and $p \in \mathbb P$, we use $e_p(c)$ for
the greatest exponent $k \in \mathbb N$ such that $p^k \mid c$, which is extended to $\mathbb Z$ by $e_p(0) := \infty$.

We will make use at
some point of the following
lemma, which belongs to
the folklore and is typically attributed to \'E. Lucas \cite{Lu1878} and R. D.
Carmichael \cite{Car1909} (the latter having fixed an error in
Lucas' original work in
the $2$-adic case).
\begin{lemma}[Lifting-the-exponent lemma]
\label{lem:lte}
For all $x,y \in \mathbb Z$, $\ell \in \mathbb N^+$ and $p \in \mathbb P$ such that
$p \nmid xy$ and $p \mid x-y$, the following conditions are satisfied:
\begin{enumerate}[label={\rm (\roman{*})}]
\item If $p \ge 3$, $\ell$ is odd, or $4 \mid x-y$, then $e_p(x^\ell - y^\ell) =
e_p(x-y) + e_p(\ell)$.
\item If $p = 2$, $\ell$ is even and $e_2(x-y) = 1$, then
$e_2(x^\ell - y^\ell) =
e_2(x+y) + e_2(\ell)$.
\end{enumerate}
\end{lemma}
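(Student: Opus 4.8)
The plan is to reduce everything to the case of a prime-power exponent and to strip off the part of $\ell$ that is coprime to $p$ by a direct factorization. First I would record the elementary fact that, under the standing hypotheses $p \nmid xy$ and $p \mid x-y$ together with the extra assumption $p \nmid \ell$, one has $e_p(x^\ell - y^\ell) = e_p(x-y)$: writing $x^\ell - y^\ell = (x-y)\sum_{i=0}^{\ell - 1} x^{\ell - 1 - i} y^i$ and using $x \equiv y \pmod p$, each summand of the cofactor is $\equiv y^{\ell - 1} \pmod p$, so the cofactor is $\equiv \ell\, y^{\ell - 1} \pmod p$, a unit modulo $p$ because $p \nmid \ell y$. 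This already settles part (i) in the subcase where $p = 2$ and $\ell$ is odd, since then $e_2(\ell) = 0$.

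Next comes the heart of part (i): the base case $\ell = p$ for an odd prime $p$, and in parallel the base case $\ell = 2$ under $4 \mid x-y$. For $p \ge 3$ I would set $d := x-y$ (so $p \mid d$) and expand the cofactor $S := \sum_{i=0}^{p-1} x^{p-1-i} y^i$ modulo $p^2$; substituting $x = y + d$ and keeping only first-order terms in $d$ gives $S \equiv p\, y^{p-1} + \binom{p}{2} y^{p-2} d \pmod{p^2}$. Because $p \ge 3$ forces $p \mid \binom{p}{2}$ and we have $p \mid d$, the second term vanishes modulo $p^2$, so $e_p(S) = 1$ and hence $e_p(x^p - y^p) = e_p(x-y) + 1$. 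For $p = 2$ with $4 \mid x-y$ the base case is simpler: from $x^2 - y^2 = (x-y)(x+y)$ and $x+y = (x-y) + 2y \equiv 2 \pmod 4$ (as $y$ is odd) we get $e_2(x+y) = 1$, whence $e_2(x^2 - y^2) = e_2(x-y) + 1$. Iterating the relevant base case on the pair $(x^{p^j}, y^{p^j})$ — whose difference is again divisible by $p$, and by $4$ in the $2$-adic case, so the hypotheses persist — yields $e_p(x^{p^k} - y^{p^k}) = e_p(x-y) + k$ by induction on $k$. Combining this with the coprime reduction applied to the exponent $m := \ell / p^{e_p(\ell)}$ gives $e_p(x^\ell - y^\ell) = e_p(x-y) + e_p(\ell)$, which is part (i).

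Finally I would derive part (ii) from part (i) by a single squaring step. Under its hypotheses — $p = 2$, $\ell$ even, $e_2(x-y) = 1$ — both $x$ and $y$ are odd and $x - y \equiv 2 \pmod 4$, which forces $x + y \equiv 2x - (x-y) \equiv 0 \pmod 4$, i.e.\ $4 \mid x+y$. Hence $x^2 - y^2 = (x-y)(x+y)$ satisfies $4 \mid x^2 - y^2$, and applying part (i) to the pair $(x^2, y^2)$ with exponent $\ell/2$ gives $e_2(x^\ell - y^\ell) = e_2(x^2 - y^2) + e_2(\ell/2) = \bigl(e_2(x-y) + e_2(x+y)\bigr) + \bigl(e_2(\ell) - 1\bigr) = e_2(x+y) + e_2(\ell)$, using $e_2(x-y) = 1$ in the last step.

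I expect the $2$-adic bookkeeping to be the main obstacle: the delicate point is to verify that the hypothesis licensing each base case ($p \ge 3$ versus $4 \mid x-y$) is exactly the one that propagates under repeated squaring, and to track why the cancellation $p \mid \binom{p}{2}$ fails precisely at $p = 2$. This bifurcation is the historical sticking point separating the odd and even cases, and it is what accounts for the split form of the statement.
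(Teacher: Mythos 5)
Your proof is correct and complete. There is, however, no in-paper argument to compare it against: the paper deliberately does not prove Lemma~\ref{lem:lte}, quoting it as folklore and citing Lucas~\cite{Lu1878} and Carmichael~\cite{Car1909} (the latter credited precisely with repairing the $2$-adic case whose delicacy you flagged). What you have reconstructed is the standard proof from the literature, and every step checks out: the cofactor $\sum_{i=0}^{\ell-1} x^{\ell-1-i} y^{i} \equiv \ell\, y^{\ell-1} \pmod{p}$ is a $p$-adic unit when $p \nmid \ell$, which both strips off the part of $\ell$ prime to $p$ and settles part (i) in the subcase $p = 2$, $\ell$ odd; your base-case computation $S \equiv p\, y^{p-1} + \binom{p}{2} y^{p-2}(x-y) \pmod{p^2}$ correctly isolates $p \mid \binom{p}{2}$ as the cancellation that holds exactly for odd $p$, with the surrogate hypothesis $4 \mid x-y$ supplying $e_2(x+y)=1$ in the $p=2$ base case (since $x+y = (x-y)+2y \equiv 2 \pmod 4$ for $y$ odd); the hypotheses do propagate under iteration, trivially for odd $p$ and for $p=2$ because each squaring only increases $e_2$ of the difference, so $4 \mid x^{2^j} - y^{2^j}$ persists; and your one-squaring reduction of (ii) to (i) — $e_2(x-y)=1$ forces $4 \mid x+y$, then apply (i) to the pair $(x^2,y^2)$ with exponent $\ell/2$ and use $e_2(\ell/2) = e_2(\ell)-1$ — is exactly the right move, with the bookkeeping $e_2(x^2-y^2) = e_2(x-y)+e_2(x+y)$ handled correctly. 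One cosmetic remark: in degenerate cases such as $x = y$, or $x = -y$ with $\ell$ even in part (ii), the relevant quantities vanish and the stated equalities hold under the paper's convention $e_p(0) := \infty$, with which your factorization argument remains consistent, so no separate treatment is needed.
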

In fact, our proof of Theorem \ref{th:main} is just the result of a
meticulous refinement of the solutions already known for the problems mentioned
in the preamble. Hence, our only possible merit, if any at all, has been that of
bringing into focus a clearer picture of (some of) their essential features.

The study of the congruences $a^n \pm b^n \equiv 0 \bmod n^k$ has a very long
history, dating back at least to
Euler, who proved that, for all relatively prime integers $a,b$ with $a > b
\ge 1$, every primitive prime divisor of $a^n - b^n$ is congruent to $1$ modulo
$n$; see \cite[Theorem I]{Birk1904} for a proof and \cite[\S{}1]{Birk1904} for
the terminology. However, since there are so many results related to the
question, instead of trying to
summarize them here, we just refer the interested reader to the paper \cite{Gyory10}, whose authors
provide an account of the existing literature on the topic and characterize, for $a,b \in \mathbb Z$ and $k \in \mathbb N^+$,
the set $R_k^{+}(a,b)$, respectively $R_k^{-}(a,b)$, of all positive integers $n$ such
that $n^k$ divides $a^n + b^n$, respectively $a^n - b^n$ (note that no assumption is made about the
coprimality of $a$ and $b$), while addressing the problem of finding the exceptional
cases when $R_1^{-}(a,b)$ and $R_2^{-}(a,b)$ are finite; see, in particular,
\cite[Theorems 1--2 and 18]{Gyory10}.
Nevertheless, the related question of determining, given $a,b \in \mathbb Z$
with $\gcd(a,b) = 1$, all positive integers $n$ such that
$n^k$ divides $a^n + b^n$ (respectively, $a^n - b^n$), when $k$ is the maximum of $|a|$ and $|b|$, does
not appear to be considered neither in
\cite{Gyory10} nor in the references therein.

On another hand, it is suggested in \cite{Gyory10} that $R_k^{+}(a,b)$ and $R_k^{-}(a,b)$ are both finite provided that $a,b,k$ are fixed integers with $k \ge 3$, $\gcd(a,b) = 1$ and $|ab| \ge 2$ (the authors point out that the question is probably a difficult one, even assuming the ABC conjecture). Although far from being an answer to this, Theorem \ref{th:main} in the present paper implies that, under the same assumptions as above, $R_k^{+}(a,b)$ and $R_k^{-}(a,b)$ are finite for $k \ge \max(|a|,|b|)$.
\section{Proofs}
\label{sec:proofs}
First, for the sake of exposition, we give a couple of lemmas.
\begin{lemma}
\label{lem:trivial}
Let $x,y,z \in \mathbb Z$ and $\ell \in \mathbb N^+$ such that $\gcd(x,y) = 1$
and $z \mid x^\ell +
y^\ell$. Then $xy$ and $z$ are relatively prime, $q
\nmid x^\ell - y^\ell$ for every integer $q \ge 3$ for which $q
\mid z$, and $4 \nmid z$ provided that $\ell$ is
even. Moreover, if there exists an odd prime divisor $p$ of $z$ and $\ell$ such that
$\gcd(\ell,p-1) = 1$, then $p \mid x +
y$, $\ell$ is odd and $e_p(z) \le e_p(x+y) + e_p(\ell)$.
\end{lemma}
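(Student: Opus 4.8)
The plan is to establish the four assertions in sequence, using the divisibility $z \mid x^\ell + y^\ell$ as the common starting point. For the first, coprimality of $xy$ and $z$: suppose a prime $p$ divides both. Since $\gcd(x,y)=1$ it divides exactly one of $x,y$, say $x$; then $p \mid x^\ell$, so from $p \mid z \mid x^\ell + y^\ell$ I get $p \mid y^\ell$, hence $p \mid y$, a contradiction. This yields $\gcd(xy,z)=1$, and in particular $\gcd(x,z) = \gcd(y,z) = 1$, which I use throughout.

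For the second claim, suppose $q \ge 3$, $q \mid z$, and $q \mid x^\ell - y^\ell$. Adding this to $q \mid x^\ell + y^\ell$ gives $q \mid 2x^\ell$; but $\gcd(q,x)=1$ (from $q \mid z$ and $\gcd(x,z)=1$) forces $q \mid 2$, contradicting $q \ge 3$. For the third, if $4 \mid z$ with $\ell$ even, then $2 \mid z$ makes $x,y$ both odd, so $x^\ell \equiv y^\ell \equiv 1 \pmod 4$ and $x^\ell + y^\ell \equiv 2 \pmod 4$, contradicting $4 \mid x^\ell + y^\ell$.

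The main work is the fourth claim. Fix an odd prime $p \mid z$ with $\gcd(\ell, p-1)=1$. Since $\gcd(y,p)=1$, set $t := xy^{-1} \bmod p$; then $p \mid x^\ell + y^\ell$ rewrites as $t^\ell \equiv -1 \pmod p$, so $t^{2\ell} \equiv 1$ while $t^\ell \not\equiv 1$ (as $p$ is odd). Letting $d$ be the multiplicative order of $t$ modulo $p$, this says $d \mid 2\ell$ but $d \nmid \ell$. Since $d \mid p-1$ and $\gcd(\ell, p-1)=1$, I get $\gcd(d,\ell)=1$, and with $d \mid 2\ell$ this forces $d \mid 2$; the case $d=1$ is excluded by $t^\ell \not\equiv 1$, so $d=2$ and $t \equiv -1 \pmod p$, i.e.\ $p \mid x+y$. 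Then $(-1)^\ell \equiv t^\ell \equiv -1$ forces $\ell$ odd.

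Finally, since $\ell$ is odd I may write $x^\ell + y^\ell = x^\ell - (-y)^\ell$ and apply the Lifting-the-exponent lemma to the pair $x,-y$: here $p \ge 3$, $p \nmid x(-y)$, and $p \mid x - (-y) = x+y$, so part (i) gives $e_p(x^\ell + y^\ell) = e_p(x+y) + e_p(\ell)$. Combined with $e_p(z) \le e_p(x^\ell + y^\ell)$, which follows from $z \mid x^\ell + y^\ell$, this yields the bound $e_p(z) \le e_p(x+y) + e_p(\ell)$. The one delicate point is the order computation modulo $p$: the hypothesis $\gcd(\ell, p-1)=1$ is precisely what collapses the order of $t$ to $2$, and I expect making that deduction airtight to be the crux of the argument.
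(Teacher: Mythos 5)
Your proof is correct and follows essentially the same route as the paper's: the multiplicative order of $xy^{-1}$ modulo $p$ is collapsed to $2$ by the hypothesis $\gcd(\ell,p-1)=1$, giving $p \mid x+y$ and $\ell$ odd, after which Lemma \ref{lem:lte} applied to the pair $x,-y$ yields $e_p(z) \le e_p(x+y)+e_p(\ell)$. The only differences are cosmetic: you write out the ``routine'' first part that the paper omits, you rule out order $1$ directly from $t^\ell \equiv -1 \pmod p$ instead of invoking the claim $q \nmid x^\ell - y^\ell$ to exclude $p \mid x-y$, and in fact your argument never uses $p \mid \ell$, so it proves a slightly stronger statement.
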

\begin{proof}
The first part is routine (we omit the details). As for the
second, let $p$
be an odd prime dividing both $z$ and $\ell$ with $\gcd(\ell,p-1) = 1$. Also,
considering that $z$ and $xy$ are relatively prime (by the above), denote by
$y^{-1}$ an inverse of $y$ modulo $p$ and by
$\omega$ the order of $xy^{-1}$ modulo
$p$, viz the smallest $k \in
\mathbb N^+$ such that $(xy^{-1})^k \equiv 1 \bmod p$; cf.
\cite[\S{}6.8]{Hardy08}.
Since $(xy^{-1})^{2\ell} \equiv 1 \bmod p$, we have $\omega
\mid 2\ell$.
It
follows from Fermat's little theorem
and \cite[Theorem 88]{Hardy08} that $\omega$ divides $\gcd(2\ell, p-1)$, whence we
get $\omega
\mid 2$, using that $\gcd(\ell,p-1) = 1$. This in turn implies that $p \mid x^2 -
y^2$, to the effect that
either $p \mid x - y$ or $p \mid x + y$. But $p
\mid
x - y$ would give that $p \mid x^\ell - y^\ell$, which is impossible by the
first part of the claim (since $p \ge 3$). So
$p \mid x + y$, with the result that $\ell$ is odd: For, if $2 \mid \ell$, then $p \mid 2x^\ell$ (because $p \mid z \mid x^\ell + y^\ell$ and $y \equiv -x \bmod p$), which would lead to $\gcd(x,y) \ge p$ (again, using that $p$ is odd), viz to a contradiction. The rest is an immediate application of Lemma
\ref{lem:lte}.
\end{proof}
\begin{lemma}
\label{lem:2}
Let $x,y,z \in \mathbb Z$ such that $x,y$ are odd and $x,y \ge 0$. Then $x^2 - y^2 = 2^z$ if and only if $z \ge 3$, $x = 2^{z-2} + 1$ and $y = 2^{z-2} - 1$.
\end{lemma}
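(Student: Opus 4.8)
The plan is to prove both implications directly, with essentially all of the substance lying in the ``only if'' direction. For the ``if'' direction I would simply observe that if $z \ge 3$, $x = 2^{z-2}+1$ and $y = 2^{z-2}-1$, then $2^{z-2}$ is even, so $x$ and $y$ are odd and nonnegative, while $x^2 - y^2 = (x-y)(x+y) = 2 \cdot 2^{z-1} = 2^z$. This is a one-line check requiring no further argument.

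For the ``only if'' direction, suppose $x,y$ are odd with $x,y \ge 0$ and $x^2 - y^2 = 2^z$. Since $x^2 - y^2$ is a positive integer, we have $z \ge 0$ and $x > y$; and since both $x$ and $y$ are odd and nonnegative, in fact $x > y \ge 1$. The key step is to factor $x^2 - y^2 = (x-y)(x+y) = 2^z$ and invoke unique factorization: each of the positive divisors $x-y$ and $x+y$ of $2^z$ must itself be a power of $2$, say $x - y = 2^s$ and $x + y = 2^t$ with $s,t \in \mathbb N$ and $s + t = z$. Because $x,y$ are both odd, $x - y$ and $x + y$ are both even, so $s \ge 1$ and $t \ge 1$; and because $y \ge 1 > 0$ we have $x - y < x + y$, hence $s < t$.

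It then remains to pin down $s$. Writing $2y = (x+y) - (x-y) = 2^t - 2^s = 2^s(2^{t-s} - 1)$ and using that $2^{t-s} - 1$ is odd (as $t - s \ge 1$), I would conclude that $e_2(2y) = s$; but $y$ odd gives $e_2(2y) = 1$, forcing $s = 1$. Substituting back yields $x - y = 2$ and $x + y = 2^t$ with $t = z - 1$, whence $x = 2^{t-1} + 1 = 2^{z-2} + 1$ and $y = 2^{t-1} - 1 = 2^{z-2} - 1$. Moreover $t > s = 1$ gives $t \ge 2$, i.e. $z = s + t \ge 3$, which completes the argument.

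As for difficulty, this is an elementary statement and I do not anticipate a genuine obstacle; the only care needed is in the bookkeeping of the parity constraints — checking that $x - y$ and $x + y$ really are positive, even powers of $2$, that the strict inequality $s < t$ holds so that $2^{t-s} - 1$ is odd, and that these combine to force $s = 1$ and hence the sharp bound $z \ge 3$ rather than merely $z \ge 1$.
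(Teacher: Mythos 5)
Your proof is correct and takes essentially the same approach as the paper: both factor $x^2 - y^2 = (x-y)(x+y)$ into two positive powers of $2$ and use the parity of $x$ and $y$ to force the smaller exponent to equal $1$, whence the explicit formulas for $x$ and $y$. The only cosmetic differences are that the paper obtains $z \ge 3$ up front from $8 \mid x^2 - y^2$ while you deduce it at the end from $s = 1 < t$, and that where the paper argues $i = 1$ by noting $x = 2^{j-1} + 2^{i-1}$ and $y = 2^{j-1} - 2^{i-1}$ would otherwise be even, you reach the same conclusion via $e_2(2y) = 1$.
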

\begin{proof}
Since $x$ and $y$ are odd, $x^2 - y^2$ is divisible by $8$, namely $z \ge 3$, and there exist $i,j \in \mathbb N^+$ such that $i+j=z$, $x - y = 2^i$ and $x + y = 2^j$. It follows that $x = 2^{j-1} + 2^{i-1}$ and $y = 2^{j-1} - 2^{i-1}$, and then $j > i$ and $i = 1$ (otherwise $x$ and $y$ would be even). The rest is straightforward.
\end{proof}
Now, we are ready to write down the proof of
the main result.
\begin{proof}[Proof of Theorem \ref{th:main}]
\ref{item:th_pt1} Assume that $n^a \mid a^n + b^n$, $n^\alpha \mid \alpha^n + \beta^n$, and $\beta \ne -\alpha$ when $n$ is odd. Since $\alpha$ and $\beta$ are coprime (by construction), it holds that $\beta \ne 0$, for otherwise $n \mid \alpha^n + \beta^n$ and $n \ge 2$ would give $\gcd(\alpha,\beta) \ge 2$. Also, $\alpha = |\beta|$ if and only if $\alpha = \beta = 1$ and $n = 2$ (as $\beta \ge 0$ for $n$ even), to the effect that $2^\delta$ divides $2\delta^2$, which is possible if and only if $\delta \in \{1, 2, 4\}$ and gives $(a,b,n) = (1,1,2)$, $(2,2,2)$, or $(4,4,2)$. So, we are left with the case when
\begin{equation}
\label{equ:01}
\alpha \ge 2\quad\text{and}\quad \alpha > |\beta| \ge 1,
\end{equation}
since $\alpha \ge \max(1,|\beta|)$. Considering that $4 \mid n^2$ for $n$ even, it follows from Lemma \ref{lem:trivial}
that $n$ is odd and $\gcd(\alpha \beta, n) = 1$. Denote by $p$ the
smallest prime divisor of $n$. Again by Lemma \ref{lem:trivial}, it is then found that $p$ divides $\alpha + \beta$ and
\begin{equation}
\label{equ:1}
\alpha - 1 \le (\alpha - 1) \cdot e_p(n) \le e_p(\alpha + \beta).
\end{equation}
Furthermore, $\alpha + \beta \ge 1$ by equation \eqref{equ:01}, to the effect that
\begin{equation}
\label{equ:2}
\alpha + \beta = p^r s,\quad\text{with }r,s \in \mathbb N^+\text{ and }p \nmid s.
\end{equation}
Therefore, equations \eqref{equ:01} and \eqref{equ:2} yield that $2\alpha \ge p^r s + 1$. This implies by equation \eqref{equ:1}, since $r = e_p(\alpha + \beta)$, that $
3^r s \le p^r s \le 2r + 1$,
which is possible only if $p = 3$ and $r = s = 1$. Thus, by equations \eqref{equ:1} and \eqref{equ:2}, we get $\alpha + \beta = 3$ and $\alpha = 2$, namely $(\alpha,\beta) = (2,1)$. Also, $e_3(n) = 1$, and hence $n = 3t$ for some $t \in \mathbb N^+$ with $\gcd(6,t) = 1$. It follows that $t^2 \mid \gamma^t + 1$ for $\gamma = 2^3$.

So suppose, for the sake of contradiction, that $t \ge 2$ and let $q$ be the least prime divisor of $t$. Then, another application
of Lemma \ref{lem:trivial} gives $2e_q(t) \le e_q(\gamma + 1) + e_q(t)$, and accordingly
$1 \le e_q(t) \le e_q(\gamma + 1) = e_q(3^2)$,
which is however absurd, due to the fact that $\gcd(3,t) = 1$.
Hence $t = 1$, i.e. $n = 3$, and
putting everything together completes the proof, because $2^3 + 1^3 = 3^2$ and $3^{2\delta} \mid \delta^2 \cdot (2^3+1^3)$ only if $\delta = 1$.

\ref{item:th_pt2} Assume that $n^a \mid a^n - b^n$, $n^\alpha \mid \alpha^n - \beta^n$, and $\beta \ne \alpha$. Since $\gcd(\alpha,\beta) = 1$, we get as in the proof of point \ref{item:th_pt1} that $\beta \ne 0$, while $\alpha = |\beta|$ only if $\alpha = 1$, $\beta = -1$, and $n$ is odd (again, $\beta \ge 0$ for $n$ even), which is however impossible, because it would give that $n \mid 2$ with $n \ge 3$.
So, we can suppose from now on that $\alpha$ and $\beta$ satisfy the same conditions as in equation \eqref{equ:01}, and write $n$ as $2^r s$, where $r \in \mathbb N$, $s \in \mathbb N^+$ and $\gcd(2,s) = 1$. We have the following:
\begin{description}
\item[Case 1:] $r = 0$, i.e. $n = s$. Then, $n$ is odd, to the effect that $n^a \mid a^n + (-b)^n$ and $n^\alpha \mid \alpha^n + (-\beta)^n$, so by point \ref{item:th_pt1} we get $(a,b,n) = (2,-1,3)$.

\item[Case 2:] $r \ge 1$. Since $n$ is even and $\gcd(\alpha,\beta) = 1$, both $\alpha$ and $\beta$ are odd, that is $8 \mid \alpha^2 - \beta^2$. It follows from point \ref{item:th_pt1} of Lemma \ref{lem:lte} that
\begin{equation}
\label{equ:added}
e_2(\alpha^n - \beta^n) = e_2(\alpha^2 - \beta^2) + e_2(2^{r-1}s) = e_2(\alpha^2 - \beta^2) + r - 1.
\end{equation}
(With the same notation as in its statement, we apply Lemma \ref{lem:lte} with $x = \alpha^2$, $y = \beta^2$, $\ell = 2^{r-1}s$, and $p = 2$.) Also, $2^{r\alpha} \mid \alpha^n - \beta^n$, so equation \eqref{equ:added} yields
\begin{equation}
\label{equ:5}
(\alpha - 1) \cdot r \le e_2(\alpha^2 - \beta^2) - 1.
\end{equation}
There now exist $u,v \in \mathbb N^+$ with $u \ge 2$ and $\gcd(2,v) = 1$ such that $\alpha^2 - \beta^2 = 2^{u+1} v$, with the result that $\alpha > 2^{u/2} \sqrt{v}$. Hence, taking also into account that $2^x \ge x + 1$ for every $x \in \mathbb R$ with $x \ge 1$, we get by equation \eqref{equ:5} that
\begin{equation}
\left(\frac{u}{2} + 1\right)\sqrt{v} \le 2^{u/2} \sqrt{v} < \frac{u}{r} + 1,
\end{equation}
which is possible only if $r = 1$ and $\sqrt{v} < 2$. Then $2^{u/2}\sqrt{v} < u + 1$, in such a way that $2 \le u \le 5$ and $v = 1$ (using that $v$ is odd).
In consequence of Lemma \ref{lem:2}, all of this implies, at the end of the day, that $\alpha = 2^z + 1$, $b = 2^z - 1$ and $n = 2s$ (recall that we want the conditions in equation \eqref{equ:01} to be satisfied and $\beta \ge 0$ for $n$ even), where $z$ is an integer between $1$ and $4$. But we need $2^z \le z + 1$ by equation \eqref{equ:5}, so necessarily $z = 1$, i.e. $\alpha = 3$ and $\beta = 1$.
Finally, we check that $(2s)^3 \mid 3^{2s} - 1^{2s}$ if and only if $s = 1$: For, if $s \ge 2$ and $q$ is the smallest prime divisor of $s$, then $0 < 3e_q(s) \le e_q(3^2 - 1)$ by Lemma \ref{lem:trivial}, which is absurd since $\gcd(2,s) = 1$. This gives $(a,b,n) = (3,1,2)$, while it is trivially seen that $2^{3\delta} \mid \delta^2 \cdot (3^2 - 1^2)$ if and only if $\delta = 1$.
\end{description}
Putting all the pieces together, the proof is thus complete.
\end{proof}

\section*{Acknowledgements}
The author thanks Paolo Leonetti (Universit\`a Bocconi di Milano) and Carlo Sanna (Universit\`a di Torino) for useful discussions. He is also grateful to an anonymous referee for remarks that have helped to improve the overall quality of the paper.

\end{document}